\newtheorem{theorem}{Theorem}[section]
\newtheorem{corollary}{Corollary}[theorem]
\providecommand{\keywords}[1]{{\textit{Keywords:}} #1}
\newcommand{\VNN}{V_{00}}
\newcommand{\KK}{{\mathsf{K_h}}}
\newcommand{\LKK}{{\mathsf{K}}}
\newcommand{\WW}{{\mathsf{W}}}
\newcommand{\PP}{{\mathsf{P}}}
\newcommand{\PPN}{{\mathsf{P^{\mathcal{N}}\! \!}}}
\newcommand{\xx}{{\mathbf{x}}}
\newcommand{\yy}{{\mathbf{y}}}
\newcommand{\zz}{{\mathbf{z}}}
\newcommand{\bb}{{\mathbf{b}}}
\newcommand{\nn}{{\mathbf{n}}}
\newcommand{\NK}{\mathcal{N}(\LKK)}
\newcommand{\pphi}{{\mathbf{\phi}}}
\newcommand{\nullspace}[1]{{\mathcal{N}(#1)}}
\DeclareMathOperator*{\argmin}{arg\,min}
\DeclareMathOperator*{\argmax}{arg\,max}
\title{Modified Tikhonov regularization for identifying several sources}
\author{Ole L{\o}seth Elvetun\thanks{Faculty of Science and Technology, Norwegian University of Life Sciences, P.O. Box 5003, NO-1432 {\AA}s, Norway. Email: ole.elvetun@nmbu.no.} and Bj{\o}rn Fredrik Nielsen\thanks{Faculty of Science and Technology, Norwegian University of Life Sciences, P.O. Box 5003, NO-1432 {\AA}s, Norway. Email: bjorn.f.nielsen@nmbu.no. Nielsen's work was supported by The Research Council of Norway, project number 239070.}}
\begin{document}

\maketitle

\begin{abstract}
We study whether a modified version of Tikhonov regularization can be used to identify several local sources from Dirichlet boundary data for a prototypical elliptic PDE. This paper extends the results presented in \cite{Elv20}.
It turns out that the possibility of distinguishing between two, or more, sources depends on the smoothing properties of a second or fourth order PDE.
Consequently, the geometry of the involved domain, as well as the position of the sources relative to the boundary of this domain, determines the identifiability. 

We also present a uniqueness result for the identification of a single local source. This result is derived in terms of an abstract operator framework and is therefore not only applicable to the model problem studied in this paper. 

Our schemes yield quadratic optimization problems and can thus be solved with standard software tools. In addition to a theoretical investigation, this paper also contains several numerical experiments. 
\end{abstract}
\keywords{Inverse source problems, PDE-constrained optimization, Tikhonov regularization, nullspace, numerical computations.}

\section{Introduction}
\label{section:introduction}
We will study the following problem: 
    \begin{equation} \label{eq1}
        \min_{(f,u) \in F_h \times H^1(\Omega)} \left\{ \frac{1}{2}\|u-d\|_{L^2(\partial\Omega)}^2 + \frac{1}{2}\alpha\|\WW f\|_{L^2(\Omega)}^2 \right\}
    \end{equation}
    subject to 
    \begin{equation} \label{eq2}
    \begin{split}
        -\Delta u + \epsilon u &= f \quad \mbox{in } \Omega, \\
        \frac{\partial u}{\partial \nn} &= 0  \quad \mbox{on } \partial \Omega, 
    \end{split}
    \end{equation}
    where $F_h$ is a finite dimensional subspace of $L^2(\Omega)$, $\WW: F_h \rightarrow F_h$ is a linear  regularization operator, $\alpha$ is a regularization parameter, $d$ represents Dirichlet boundary data, $\epsilon$ is a positive constant, $\nn$ denotes the outwards pointing unit normal vector of the boundary $\partial \Omega$ of the bounded domain $\Omega$, and $f$ is the source. Depending on the choice of $\WW$, we obtain different regularization terms, including the standard version $\WW = I$ (the identity map). 
    
    The purpose of solving \eqref{eq1}-\eqref{eq2} is to estimate the unknown source $f$ from the Dirichlet boundary data $u=d$ on $\partial \Omega$. Mathematical problems similar to this occur in numerous applications, e.g., in EEG investigations and in the inverse ECG problem, and has been studied by many scientists, see, e.g., \cite{Abdel15,babda09,cheng15,Bad00, Han11,Het96,hinze19,BIsa05,kun94,ring95,song12,Wan17,Zha18,Zha19}. A more detailed description of previous investigations is presented in \cite{Elv20}. 
    
    In \cite{Elv20} we showed with mathematical rigor that a particular choice of $\WW$ {\em almost} enables the identification of the position of a {\em single} local source from the boundary data. That paper also contains numerical experiments suggesting that two or three local sources, in some cases, can be recovered. The purpose of this paper is to explore the {\em several sources} situation in more detail, both theoretically and experimentally. Moreover, we prove that our particular choice of $\WW$, which will be presented below, enables the {\em precise} recovery of a single local source.   
    
\section{Analysis} 
\subsection{Results for general problems}
Let us consider the abstract operator equation 
\begin{equation}
\label{A1}
\KK \xx = \bb, 
\end{equation}
where $\KK: X \rightarrow Y$ is a linear operator with a {\em nontrivial nullspace} and possibly very small singular values, $X$ and $Y$ are real Hilbert spaces, $X$ is finite dimensional and $\bb \in Y$. (For the problem \eqref{eq1}-\eqref{eq2}, $\KK$ is the forward operator 
$$
\KK: F_h \rightarrow L^2(\partial \Omega) , \quad
f \mapsto u|_{\partial \Omega},
$$
where $F_h$ is a finite dimensional subspace of $L^2(\Omega)$, and $u$ is the unique solution of the boundary value problem \eqref{eq2} for a given $f$.) 

Applying traditional Tikhonov regularization yields the approximation 
\begin{equation}
\label{A0}
\xx_{\alpha} = \argmin_{\xx} \left\{ \frac{1}{2} \| \KK \xx - \bb \|_Y^2 + \frac{1}{2} \alpha \| \xx \|_X^2 \right\}, 
\end{equation}
and, according to standard theory, the minimum norm least squares solution $\xx^*$ of \eqref{A1} satisfies 
\[
\xx^* = \lim_{\alpha \rightarrow 0} \xx_{\alpha} = \KK^{\dagger} \bb \in \nullspace{\KK}^{\perp}, 
\]
where $\nullspace{\KK}^{\perp}$ denotes the orthogonal complement of the nullspace $\nullspace{\KK}$ of $\KK$, and $\KK^{\dagger}$ represents the Moore-Penrose inverse of $\KK$.  

Throughout this paper we assume that $$\mathcal{B} = \{ \pphi_1, \, \pphi_2, \, \ldots, \pphi_n \}$$ is an {\em orthonormal basis} for $X$ and that 
\begin{equation}
    \label{A2}
    K_h(\pphi_i) \neq c K_h(\pphi_j) \quad \mbox{for } i \neq j \mbox{ and } c \in \mathbb{R}.
\end{equation} 
That is, the images under $\KK$ of the basis functions are not allowed to be parallel. Note that \eqref{A2} asserts that none of the basis functions belong to the nullspace $\nullspace{\KK}$ of $\KK$. (For PDE-constrained optimization problems one can, e.g., choose basis functions with local support. We will return to this matter in subsection \ref{elliptic_source_problems}.)   

Throughout this text, 
\begin{equation}
\label{A3.02} 
\PP: X \rightarrow \nullspace{\KK}^{\perp}
\end{equation}
denotes the orthogonal projection of elements in $X$ onto $\nullspace{\KK}^{\perp}$.
In \cite{Elv20} we investigated whether a single basis function $\pphi_j$ can be recovered from its image\footnote{Since $\KK$ has a nontrivial nullspace, it is by no means obvious that $\pphi_j$ can be recovered from its image $\KK(\pphi_j)$.} $\KK \pphi_j$. More specifically, using the fact that $\KK^{\dagger} \KK = \PP$, we observe that the minimum norm least squares solution $\xx_j^*$ of 
\begin{equation}
\label{A3}
\KK \xx = \KK \pphi_j
\end{equation} 
is 
\begin{equation}
\label{A3.01}
\xx_j^* = \KK^{\dagger} (\KK \pphi_j) = \PP \pphi_j. 
\end{equation}
Furthermore, provided that the linear regularization operator $\WW: X \rightarrow X$ is defined by 
\begin{equation}
    \label{A3.1}
    \WW \pphi_i = \| \PP \pphi_i \|_X \pphi_i \quad \mbox{for } i=1,2,\ldots, n, 
\end{equation}
 it follows from \eqref{A3.01}, the orthonormality of the basis $\mathcal{B}= \left\{ \pphi_1, \pphi_2, \ldots, \pphi_n \right\}$ and basic properties of orthogonal projections that 
\begin{align}
\nonumber
\WW^{-1} \xx_j^* &= \WW^{-1} \PP \pphi_j \\
\nonumber
&=\WW^{-1} \sum_{i=1}^{n} \left(\PP \pphi_j, \pphi_i \right)_X  \pphi_i \\
\nonumber
&=\WW^{-1} \sum_{i=1}^{n} \left(\PP \pphi_j, \PP \pphi_i \right)_X  \pphi_i \\
\nonumber
&=\sum_{i=1}^{n} \left( \PP \pphi_j, \frac{\PP \pphi_i}{\| \PP \pphi_i \|_X} \right)_X  \pphi_i \\
\label{A3.2}
&= \| \PP \pphi_j \|_X \sum_{i=1}^{n} \left( \frac{\PP \pphi_j}{\| \PP \pphi_j \|_X}, \frac{\PP \pphi_i}{\| \PP \pphi_i \|_X} \right)_X  \pphi_i.  
\end{align}
Consequently, the minimum norm least squares solution $\xx_j^*$ of \eqref{A3} is such that 
\begin{equation}
\label{A4}
j \in \argmax_{i \in \{1,2, \ldots, n\}} \left( \WW^{-1} \xx_j^* (i) \right),
\end{equation}
where $\WW^{-1} \xx_j^* (i)$ denotes the $i$'th component of the Euclidean vector $[\WW^{-1} \xx_j^*]_{\mathcal{B}} \in \mathbb{R}^n$. This implies that we almost can recover the basis function $\pphi_j$ from its image $\KK \pphi_j$: Compute the minimum norm least squares solution $\xx_j^*$ of \eqref{A3}. Then $j$ is among the indexes for which $\WW^{-1} \xx_j^*$ attains its maximum. For further details, see Theorem 4.2 in \cite{Elv20}. We write {\em almost} because the maximum component of $[\WW^{-1} \xx_j^*]_{\mathcal{B}}$ may not be unique.

Based on these findings, we defined {\bf \underline{Method I}}  in \cite{Elv20} as: Compute 
\begin{equation}
\label{A4.1}
\WW^{-1} \xx_{\alpha}, 
\end{equation}
where $\xx_{\alpha}$ is the outcome of applying standard Tikhonov regularization \eqref{A0}, and the operator $\WW$ is defined in \eqref{A3.1}. (Assume that $\pphi_j$ is a basis function with local support. Then the discussion above shows that a local source equaling $\pphi_j$ almost can be recovered by Method I from its image $\KK \pphi_j$.) 

\subsubsection{Uniqueness}
We will now show that we can replace "$\in$" in \eqref{A4} with equality if \eqref{A2} holds, i.e., the maximum component of $[\WW^{-1} \xx_j^*]_{\mathcal{B}}$ is unique. 
\begin{theorem}
\label{theorem:uniqueness}
Assume that the basis $\mathcal{B}$ for $X$ is orthonormal and that \eqref{A2} holds. Then the minimum norm least squares solution $\xx_j^*$ of \eqref{A3} is such that 
\begin{equation}
\nonumber
j = \argmax_{i \in \{1,2, \ldots, n\}} \left( \WW^{-1} \xx_j^* (i) \right),
\end{equation}
where $\WW$ is defined in \eqref{A3.1}.
\end{theorem}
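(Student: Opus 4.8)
The plan is to read the conclusion off from the identity \eqref{A3.2} that has already been established. Expressing $\WW^{-1}\xx_j^*$ in the orthonormal basis $\mathcal{B}$, its $i$'th component is
\[
\WW^{-1}\xx_j^*(i) \;=\; \|\PP\pphi_j\|_X\left(\frac{\PP\pphi_j}{\|\PP\pphi_j\|_X},\,\frac{\PP\pphi_i}{\|\PP\pphi_i\|_X}\right)_X ,
\]
which makes sense because \eqref{A2} forces $\PP\pphi_i\neq 0$ for every $i$ (hence also $\WW$ is invertible, cf.\ \eqref{A3.1}). Taking $i=j$ gives $\WW^{-1}\xx_j^*(j)=\|\PP\pphi_j\|_X>0$, so it suffices to prove the strict inequality $\WW^{-1}\xx_j^*(i)<\|\PP\pphi_j\|_X$ for every $i\neq j$.

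First I would apply the Cauchy--Schwarz inequality to the two unit vectors $\PP\pphi_j/\|\PP\pphi_j\|_X$ and $\PP\pphi_i/\|\PP\pphi_i\|_X$; this yields $\WW^{-1}\xx_j^*(i)\le\|\PP\pphi_j\|_X$, with equality precisely when $\PP\pphi_i=c\,\PP\pphi_j$ for some $c>0$. Thus the theorem reduces to the claim that, for $i\neq j$, the projections $\PP\pphi_i$ and $\PP\pphi_j$ are not parallel.

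The key step --- and the only place assumption \eqref{A2} enters --- is the observation that $\KK$ vanishes on $\nullspace{\KK}$, so that $\KK=\KK\PP$ on all of $X$, since $\xx-\PP\xx\in\nullspace{\KK}$ for every $\xx$. Consequently $\PP\pphi_i=c\,\PP\pphi_j$ would imply
\[
\KK\pphi_i=\KK\PP\pphi_i=c\,\KK\PP\pphi_j=c\,\KK\pphi_j ,
\]
contradicting \eqref{A2} whenever $i\neq j$. Hence the Cauchy--Schwarz estimate is strict for $i\neq j$, so $\WW^{-1}\xx_j^*(i)<\|\PP\pphi_j\|_X=\WW^{-1}\xx_j^*(j)$ and $j$ is the unique maximizer. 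The argument is short; the only subtlety is the Cauchy--Schwarz equality case: the inner product of two unit vectors equals $1$ only when the vectors coincide, so it is exactly the positive-multiple configuration $\PP\pphi_i=c\,\PP\pphi_j$, $c>0$, that \eqref{A2} must --- and does --- exclude.
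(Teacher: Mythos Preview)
Your proof is correct and follows essentially the same route as the paper: read off the $i$'th component from \eqref{A3.2}, apply Cauchy--Schwarz, and rule out the equality case via the implication $\PP\pphi_i=c\,\PP\pphi_j\Rightarrow\KK\pphi_i=c\,\KK\pphi_j$, which contradicts \eqref{A2}. The only cosmetic differences are that the paper writes the implication using the decomposition $\pphi_i=\PP\pphi_i+\PPN\pphi_i$ rather than your equivalent one-line identity $\KK=\KK\PP$, and that the paper bounds the \emph{absolute value} of the inner product strictly below $1$ (equality case $c\in\mathbb{R}$), whereas you bound the inner product itself (equality case $c>0$); either version suffices for the conclusion.
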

\begin{proof}
Recall the expression \eqref{A3.2} for $\WW^{-1} \xx_j^*$ and the definition \eqref{A3.02} of the projection $\PP$. From the Cauchy-Schwarz inequality we know that 
\[
\left| \left( \frac{\PP \pphi_j}{\| \PP \pphi_j \|_X}, \frac{\PP \pphi_i}{\| \PP \pphi_i \|_X} \right)_X \right| \leq 1, 
\]
with equality if, and only if, there is a constant $c$ such that $\PP \pphi_i = c \PP \pphi_j$. 

Let 
\begin{equation}
\label{A4.2}
\PPN: X \rightarrow \nullspace{\KK}
\end{equation}
denote the orthogonal projection mapping elements of $X$ onto $\nullspace{\KK}$, i.e., $$\PPN=I-\PP,$$ where $I$ is the identity mapping. Assume that $\PP \pphi_i = c \PP \pphi_j$ for some $c \in \mathbb{R}$. The orthogonal decomposition 
\[
\pphi_i=\PP \pphi_i + \PPN \pphi_i
\]
yields that 
\begin{align*}
\KK \pphi_i &= \KK(\PP \pphi_i) + \KK(\PPN \pphi_i) \\
&=\KK(\PP \pphi_i) \\
&=\KK(c \PP\pphi_j) \\
&=c \KK(\PP \pphi_j) \\
&=c \KK(\PP \pphi_j + \PPN \pphi_j) \\
&=c \KK(\pphi_j).
\end{align*}
We have thus proved the implication 
\begin{equation}
\label{A5}
\PP \pphi_i = c \PP \pphi_j \quad \Rightarrow \quad \KK \pphi_i = c \KK \pphi_j. 
\end{equation}
Consequently, if \eqref{A2} holds, then 
\[
\left| \left( \frac{\PP \pphi_j}{\| \PP \pphi_j \|_X}, \frac{\PP \pphi_i}{\| \PP \pphi_i \|_X} \right)_X \right| < 1 \quad \mbox{for } i \neq j.
\]
The result now follows from \eqref{A3.2}.
\end{proof}

This result only shows that we can recover the individual basis function $\pphi_j$ from its image $\KK \pphi_j$. Nevertheless, the numerical experiments in \cite{Elv20} indicate that Method I also is capable of identifying more general local sources from Dirichlet boundary data. We will discuss this issue in more detail in subsection \ref{composite_sources} below. 

\subsubsection*{Remark} We mention that the opposite implication of \eqref{A5} also holds: Assume that $\KK \pphi_i = c \KK \pphi_j$. Then, see \eqref{A3.01}, 
\begin{align*}
  \PP \pphi_i &= \KK^{\dagger}(\KK \pphi_i) \\
  &= \KK^{\dagger}(c\KK \pphi_j) \\
  &=c \KK^{\dagger}(\KK \pphi_j) \\
  &= c \PP \pphi_j,
\end{align*}
which together with \eqref{A5} leads to the conclusion   
\[
\PP \pphi_i = c \PP \pphi_j \quad \Longleftrightarrow \quad \KK \pphi_i = c \KK \pphi_j.
\]

\subsubsection{Several sources}
Since Theorem \ref{theorem:uniqueness} asserts that the maximum component of $[\WW^{-1} \xx_j^*]_{\mathcal{B}}$ is unique, it makes sense to use the linearity of the problem to extend Theorem \ref{theorem:uniqueness} to cases involving several basis functions:  
\begin{corollary}
\label{corollary:several_source}
Let $\{j_1, \, j_2, \ldots, \, j_r \} \subset \{ 1,2,\ldots,n \}$ be an index set and assume that \eqref{A2} holds. Then the minimum norm least squares solution $\xx^*$ of 
\begin{equation}
\label{A6.01}
\KK \xx = \KK (\pphi_{j_1}+\pphi_{j_2}+ \cdots +\pphi_{j_r})
\end{equation} 
satisfies  
\begin{equation}
\label{A6.1}
    \WW^{-1}\xx^* = \WW^{-1}\xx_{j_1}^* 
    +\WW^{-1}\xx_{j_2}^* 
    +\cdots  
    + \WW^{-1}\xx_{j_r}^*, 
\end{equation}
where 
\begin{align}
\nonumber
&j_q = \argmax_{i \in \{1,2, \ldots, n\}} \left( \WW^{-1} \xx_{j_q}^* (i) \right), \\ 
\label{A7}
&\WW^{-1} \xx_{j_q}^* = \| \PP \pphi_{j_q} \|_X \sum_{i=1}^{n} \left( \frac{\PP \pphi_{j_q}}{\| \PP \pphi_{j_q} \|_X}, \frac{\PP \pphi_i}{\| \PP \pphi_i \|_X} \right)_X  \pphi_i 
\end{align}
for $q=1,2,\ldots,r$. Here, $\xx_{j_q}^*$ denotes the minimum norm least squares solution of 
\begin{equation}
\nonumber
\KK \xx = \KK \pphi_{j_q} \quad \mbox{for } q=1,2,\ldots,r.
\end{equation} 
\end{corollary}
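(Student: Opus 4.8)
The plan is to reduce the whole statement to linearity of the Moore--Penrose inverse together with Theorem~\ref{theorem:uniqueness} applied one index at a time. First I would use $\KK^\dagger \KK = \PP$ to identify the minimum norm least squares solution of \eqref{A6.01} as
\[
\xx^* = \KK^\dagger \KK (\pphi_{j_1} + \pphi_{j_2} + \cdots + \pphi_{j_r}) = \PP(\pphi_{j_1} + \pphi_{j_2} + \cdots + \pphi_{j_r}).
\]
Since $\PP$ is linear this equals $\PP \pphi_{j_1} + \cdots + \PP \pphi_{j_r}$, and by \eqref{A3.01} each summand is exactly $\xx_{j_q}^* = \PP \pphi_{j_q}$, the minimum norm least squares solution of $\KK \xx = \KK \pphi_{j_q}$. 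Hence $\xx^* = \xx_{j_1}^* + \cdots + \xx_{j_r}^*$, and applying the linear operator $\WW^{-1}$ to this identity yields \eqref{A6.1} directly.

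Next I would record the explicit formula \eqref{A7} for each term $\WW^{-1}\xx_{j_q}^*$: this is nothing but the computation culminating in \eqref{A3.2}, with $j$ replaced by $j_q$. That derivation used only the orthonormality of $\mathcal{B}$, the definition \eqref{A3.1} of $\WW$, and elementary properties of orthogonal projections, so it transfers verbatim. Finally, the claim $j_q = \argmax_{i}\left(\WW^{-1}\xx_{j_q}^*(i)\right)$ is precisely the conclusion of Theorem~\ref{theorem:uniqueness} applied to the single basis function $\pphi_{j_q}$; this is where the standing assumption \eqref{A2} is genuinely used, since it forces the Cauchy--Schwarz inequality in that proof to be strict for $i \neq j_q$ and hence makes the maximum component unique.

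I do not anticipate a real obstacle: the corollary is essentially a bookkeeping consequence of linearity plus the already-established uniqueness result. The only subtlety worth emphasizing is what the statement does \emph{not} assert -- the decomposition \eqref{A6.1} exhibits $\WW^{-1}\xx^*$ as a superposition of $r$ vectors, each peaking at a distinct $j_q$, but the off-diagonal inner products $\left(\PP\pphi_{j_q}/\|\PP\pphi_{j_q}\|_X,\ \PP\pphi_i/\|\PP\pphi_i\|_X\right)_X$ appearing in \eqref{A7} can in principle shift or mask the individual peaks once the $r$ terms are summed. Consequently, reading off all $r$ indices from $\WW^{-1}\xx^*$ alone is not immediate from the corollary, and that finer question is the one deferred to the subsequent discussion of composite sources.
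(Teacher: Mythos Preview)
Your proposal is correct and follows essentially the same route as the paper: both arguments use the linearity of $\KK$, $\KK^{\dagger}$ (equivalently, of $\PP = \KK^{\dagger}\KK$) and $\WW^{-1}$ to obtain the decomposition \eqref{A6.1}, and then invoke \eqref{A3.2} and Theorem~\ref{theorem:uniqueness} for the explicit form \eqref{A7} and the uniqueness of each $\argmax$. Your closing remark about what the corollary does \emph{not} assert is accurate and in fact anticipates the paper's own commentary immediately following the proof.
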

\begin{proof}
Since $\xx_{j_q}^* = \KK^{\dagger}(\KK \pphi_{j_q})$, the linearity of $\KK$, $\KK^{\dagger}$ and $\WW$ imply that 
\begin{align*}
  \WW^{-1} \xx^* &= \WW^{-1} \KK^{\dagger}(\KK \pphi_{j_1})+\WW^{-1}\KK^{\dagger}(\KK \pphi_{j_2})+\cdots+\WW^{-1}\KK^{\dagger}(\KK \pphi_{j_r}) \\
  &= \WW^{-1} \xx_{j_1}^* +\WW^{-1} \xx_{j_2}^*+\cdots+\WW^{-1}\xx_{j_r}^*, 
\end{align*}
and the result therefore follows from Theorem \ref{theorem:uniqueness} and \eqref{A3.2}.
\end{proof}

Roughly speaking, Corollary \ref{corollary:several_source} shows that $\WW^{-1}\xx^*$ can be written as a sum \eqref{A6.1} of vectors which achieve their maximums for the correct indices. Consequently, 
if the index subsets associated with the significantly sized components of the Euclidean vectors 
\begin{equation}
\label{A8}
[\WW^{-1}\xx_{j_1}^*]_{\mathcal{B}}, \, [\WW^{-1}\xx_{j_2}^*]_{\mathcal{B}}, \, \ldots, \, [\WW^{-1}\xx_{j_r}^*]_{\mathcal{B}}
\end{equation} 
are disjoint, then this corollary shows that we can recover all the vectors $\pphi_{j_1}, \, \pphi_{j_2}, \, \ldots, \, \pphi_{j_r}$ from $\KK (\pphi_{j_1}+\pphi_{j_2}+ \cdots +\pphi_{j_1r})$. This indicates that  Method I in many cases should be able to identify several sources. Nevertheless, the content of the vectors \eqref{A8} depends on the projection $\PP$, see \eqref{A7} and \eqref{A3.02}, and the properties of this projection is problem dependent. Below we will explore this issue in more detail for our model problem \eqref{eq1}-\eqref{eq2}. 

\subsubsection{Composite local sources}
\label{composite_sources}
So far we have only studied local sources consisting of a single basis function. Let us now consider a local source which is a sum of several basis functions, i.e., 
\begin{equation}
\label{B2.7}
    f = a_1 \pphi_{j_1}+ a_2 \pphi_{j_2} + \cdots + a_r \pphi_{j_r}, 
\end{equation}
where $a_1, \, a_2, \ldots, a_r$ are constants. Can we roughly recover such a source from its image $\KK f$? 

As in the analysis leading to Corollary \ref{corollary:several_source}, we find that the minimum norm least squares solution $\xx^*$ of 
\begin{equation*}
    \KK \xx = \KK (a_1 \pphi_{j_1}+ a_2 \pphi_{j_2} + a_3 \cdots + a_r \pphi_{j_r})
\end{equation*}
is such that  
\begin{equation}
\label{B2.71}
    \WW^{-1} \xx^* = a_1 \WW^{-1}\xx_{j_1}^* 
    + a_2 \WW^{-1}\xx_{j_2}^* 
    +\cdots  
    + a_r \WW^{-1}\xx_{j_r}^*. 
\end{equation}
Consequently, if $\pphi_{j_1}, \, \pphi_{j_2}, \ldots, \, \pphi_{j_r}$ are basis functions with neighboring local supports, then $a_1 \WW^{-1}\xx_{j_1}^*, \, a_2 \WW^{-1}\xx_{j_2}^*, \ldots, \, a_r \WW^{-1}\xx_{j_r}^*$ will all achieve their maximums (or minimums) in these neighboring supports. We thus expect that $\WW^{-1} \xx^*$ roughly will recover the composite local source \eqref{B2.7}. 

If the right-hand-side $\bb$ in \eqref{A1} does not belong to the range of $\KK$, then the analysis of the potential recovery of a local source becomes even more involved. Typically, one would consider the problem 
\begin{equation*}
    \KK \xx = \hat{\bb}, 
\end{equation*}
where $\hat{\bb}$ represents the orthogonal projection of $\bb$ onto the range of $\KK$. Assuming that there exists a composite function $f$ in the form \eqref{B2.7} such that 
\begin{equation}
    \label{B2.72}
    \KK f = \hat{\bb}, 
\end{equation}
the discussion above suggests that $\WW^{-1} \xx^*$ can yield an approximation of $f$. Here, $\xx^*$ is the minimum norm least squares solution of 
$$
\KK \xx = \hat{\bb} (= \KK (a_1 \pphi_{j_1}+ a_2 \pphi_{j_2} + a_3 \cdots + a_r \pphi_{j_r})).
$$
Whether $\WW^{-1} \xx^*$ also yields an approximation of the true local source, and not only the function $f$ satisfying \eqref{B2.72}, will definitely depend on how "close" $\bb$ is to the range of $\KK$ and the ill-posed nature of \eqref{A1}.

In the numerical experiments section below we primarily study cases where the right-hand-side $\bb$ in \eqref{A1} does not belong to the range of $\KK$: The synthetic observation data $d$ in \eqref{eq1} was generated by solving the forward problem on a finer grid than was used in the inversion process.  

\subsection{Results for elliptic source problems}
\label{elliptic_source_problems}
We will now study the PDE-constrained optimization problem \eqref{eq1}-\eqref{eq2}. Let us discretize the unknown source $f$ in terms of the basis functions 
\begin{align}
\nonumber
\pphi_i &= \frac{1}{\| \mathcal{X}_{\Omega_i}  \|_{L^2(\Omega)}} \mathcal{X}_{\Omega_i} \\
\label{B2.8}
&= A^{-1/2} \, \mathcal{X}_{\Omega_i}, \quad i = 1, 2, \ldots, n, 
\end{align}
where $\Omega_1, \, \Omega_2, \ldots, \Omega_n$ are uniformly sized disjoint grid cells, $\mathcal{X}_{\Omega_i}$ denotes the characteristic function of $\Omega_i$ and $A=|\Omega_1|=|\Omega_2|=\ldots=|\Omega_n|$. That is, the space $X$ associated with \eqref{A1} is 
\[
X = F_h = \mathrm{span} \{ \pphi_1, \, \pphi_2, \ldots, \, \pphi_n \}
\]
and thus consists of piecewise constant functions. (In appendix \ref{Poisson} we explain how such basis functions also can be employed when $\epsilon=0$, i.e., when the PDE in \eqref{eq2} is Poisson's equation.) Throughout this paper we assume that $\Omega$ and the subdomains $\Omega_1, \, \Omega_2, \ldots, \Omega_n$ are such that \eqref{A2} holds.  

Note that the basis functions \eqref{B2.8} are $L^2$-orthonormal and has local support. From the latter property, and the fact that $\PP:X \rightarrow \nullspace{\KK}^{\perp}$ is an orthogonal projection, it follows that we can write \eqref{A3.2} in the form 
\begin{align}
\nonumber
    \WW^{-1} \xx_j^* &= \sum_{i=1}^{n} \frac{1}{\| \PP \pphi_i \|_{L^2(\Omega)}} \left(\PP \pphi_j, \PP \pphi_i \right)_{L^2(\Omega)}  \pphi_i \\
    \nonumber
    &= \sum_{i=1}^{n} \frac{1}{\| \PP \pphi_i \|_{L^2(\Omega)}} \left(\PP \pphi_j, \pphi_i \right)_{L^2(\Omega)}  \pphi_i \\
    \nonumber
    &= A^{-1/2} \sum_{i=1}^{n} \frac{1}{\| \PP \pphi_i \|_{L^2(\Omega)}} \int_{\Omega_i} \PP \pphi_j \, dx \, \,  \pphi_i \\
    \nonumber
    &=A^{-1/2} \frac{1}{\| \PP \pphi_j \|_{L^2(\Omega)}} \int_{\Omega_j} \PP \pphi_j \, dx \, \,  \pphi_j \\
    \nonumber
    & \hspace{0.4cm} + 
    A^{-1/2} \sum_{i=1, i \neq j}^{n} \frac{1}{\| \PP \pphi_i \|_{L^2(\Omega)}} \int_{\Omega_i} \PP \pphi_j \, dx \, \,  \pphi_i. 
\end{align}
Recall that 
$$
\PP \pphi_j \in \nullspace{\KK}^{\perp} \subset X = \mathrm{span}\{\pphi_1, \pphi_2, \ldots, \pphi_n\},
$$ 
and that the functions in $X$ are piecewise constant. Consequently, 
$$
\int_{\Omega_i} \PP \pphi_j \, dx = A \, \PP \pphi_j(z_i) \mbox{ for any } z_i \in \Omega_i,
$$
and we conclude that 
\begin{align}
\nonumber
    \WW^{-1} \xx_j^* &=A^{1/2} \frac{\PP \pphi_j (z_j)}{\| \PP \pphi_j \|_{L^2(\Omega)}} \, \,  \pphi_j \\
    \label{B3}
    & \hspace{0.4cm} + 
    A^{1/2} \sum_{i=1, i \neq j}^{n} \frac{\PP \pphi_j(z_i)}{\| \PP \pphi_i \|_{L^2(\Omega)}} \, \,  \pphi_i,
\end{align}
where $z_1 \in \Omega_1$, $z_2 \in \Omega_2$, $\ldots$, $z_n \in \Omega_n$ are arbitrary points in these subdomains. 

Alternatively, we can express $\WW^{-1} \xx_j^*$ in terms of the projection $\PPN$ onto the nullspace $\nullspace{\KK}$, see \eqref{A4.2}. More specifically, from \eqref{B3} and the orthogonal decomposition $\pphi_j = \PP \pphi_j + \PPN \pphi_j$ 
it follows that
\begin{align}
    \nonumber
    \WW^{-1} \xx_j^* &= A^{1/2} \frac{A^{-1/2} - \PPN \pphi_j(z_j)}{\sqrt{1-\| \PPN \pphi_j \|_{L^2(\Omega)}^2}}  \, \,  \pphi_j \\
    \label{B4}
    & \hspace{0.4cm} - 
    A^{1/2} \sum_{i=1, i \neq j}^{n} \frac{\PPN \pphi_j(z_i)}{\sqrt{1-\| \PPN \pphi_i \|_{L^2(\Omega)}^2}}  \, \,  \pphi_i.  
\end{align}
Here we have used the facts that $\pphi_j(z_j) = A^{-1/2}$ and that $\pphi_j(z_i) = 0$ for $i \neq j$, see \eqref{B2.8}. 

From Theorem \ref{theorem:uniqueness}, \eqref{B3} and \eqref{B4} it follows that 
\begin{align*}
    \frac{\PP \pphi_j (z_j)}{\| \PP \pphi_j \|_{L^2(\Omega)}} &> \frac{\PP \pphi_j(z_i)}{\| \PP \pphi_i \|_{L^2(\Omega)}} \quad \mbox{for } i \neq j, \\
    \frac{A^{-1/2} - \PPN \pphi_j(z_j)}{\sqrt{1-\| \PPN \pphi_j \|_{L^2(\Omega)}^2}} &> 
    \frac{-\PPN \pphi_j(z_i)}{\sqrt{1-\| \PPN \pphi_i \|_{L^2(\Omega)}^2}} \quad \mbox{for } i \neq j, 
\end{align*}
which show that the dominance of the $j$'th component of the Euclidean vector $[\WW^{-1} \xx_j^*]_{\mathcal{B}}$ is determined by the projections $\PP \pphi_j$ or $\PPN \pphi_j$ of $\pphi_j$ onto $\nullspace{\KK}^{\perp}$ and $\nullspace{\KK}$, respectively. As discussed in connection  with \eqref{A3.2} and Theorem \ref{theorem:uniqueness}, we can recover $\pphi_j$ from its image $\KK \pphi_j$ by identifying the largest component of $[\WW^{-1} \xx_j^*]_{\mathcal{B}}$. 
Furthermore, the present analysis reveals that to what degree $\WW^{-1} \xx_j^*$ yields a "smeared out/blurred" approximation of $\pphi_j$ depends on how fast $\PP \pphi_j(z_i)$ (or $\PPN \pphi_j(z_i)$) decays as a function of the distance between $\Omega_i$ and $\Omega_j$. This decay also determines to what extent Method I can identify several local sources, see Corollary \ref{corollary:several_source}.   

\subsubsection{Properties of the projections}\label{subsec:propproj}
Motivated by the investigation presented above, we will know explore the mathematical properties of the orthogonal projections $\PP \pphi_j$ and $\PPN \pphi_j$, see \eqref{A3.02} and \eqref{A4.2}. To this end, consider the forward operator 
$$
\KK: F_h \rightarrow L^2(\partial \Omega), \quad
f \mapsto u|_{\partial \Omega},$$ 
associated with \eqref{eq1}-\eqref{eq2}. Here, $u$ is the solution of the following variational form of the boundary value problem \eqref{eq2}: Determine $u \in H^2(\Omega)$ such that 
\begin{align*}
   \int_{\Omega} (-\Delta u + \epsilon u) \xi  \, dx &= \int_{\Omega} f \xi \, dx 
   \quad \forall \xi \in L^2(\Omega), \\
   \frac{\partial u}{\partial \nn} &= 0  \quad \mbox{on } \partial \Omega.
\end{align*}
This rather non-standard variational form is employed for the sake of simplicity.

If we define 
\begin{equation}
    \label{B5}
    \VNN = \left\{\psi \in H^2(\Omega): \, -\Delta \psi + \epsilon \psi \in X, \, \psi = \frac{\partial\psi}{\partial\mathbf{n}} = 0 \textnormal{ on } \partial\Omega \right\}, 
\end{equation}
then the nullspace of $\KK$ can be characterized as follows 
\begin{equation}
\label{B6}
\nullspace{\KK} =  \left\{q = -\Delta \psi + \epsilon \psi, \, \psi \in \VNN \right\}.
\end{equation}
Any function $r \in \nullspace{\KK}^{\perp}$ satisfies 
$$
\int_{\Omega} r q \, dx = 0 \quad \forall q \in \nullspace{\KK}
$$
or 
\begin{equation}
\label{B0}
\int_{\Omega} r \cdot (-\Delta \psi + \epsilon \psi)  \, dx = 0 \quad \forall \psi \in \VNN. 
\end{equation}
We may, in view of Green's formula/integration by parts, say that $r$ is a discrete {\em very weak solution} of
\begin{equation}
\label{B1}
-\Delta r + \epsilon r = 0. 
\end{equation}
(Any member of $\nullspace{\KK}^{\perp} \subset X$ is piecewise constant, and \eqref{B1} is thus not  meaningful for such functions.) We here use the word {\em discrete} because $r \in \nullspace{\KK}^{\perp} \subset X$ and $X$ is finite dimensional.

Recall that $\xx_j^* = \PP \pphi_j \in \nullspace{\KK}^{\perp}$, see \eqref{A3.01} and \eqref{A3.02}, and we conclude that the minimum norm least squares solution $\xx_j^*$ of \eqref{A3} is the best discrete approximation of $\pphi_j$ satisfying, in a very weak sense, 
\begin{equation}
\label{B1.1}
-\Delta \PP \pphi_j + \epsilon \PP \pphi_j = 0.
\end{equation}
In other words, provided that $\epsilon=0$, $\PP \pphi_j$ is the best discrete very weak harmonic approximation of $\pphi_j$. 

Having characterized $\PP \pphi_j$, we turn our attention toward $\PPN \pphi_j$. 
Since $\PPN \pphi_j$ belongs to $\nullspace{\KK}$, it has a "generating" function $\tau_j \in \VNN$, see \eqref{B5} and \eqref{B6}: 
$$
\PPN \pphi_j = -\Delta \tau_j + \epsilon \tau_j.
$$ 
Choosing $r=\PP \pphi_j = \pphi_j-\PPN \pphi_j$ in \eqref{B0} yields that this "generating" function must satisfy the following discrete weak version of a fourth order PDE: Find $\tau_j \in \VNN$ such that 
\begin{equation}
\label{B2}
   \int_{\Omega} (-\Delta \tau_j + \epsilon \tau_j) \cdot (-\Delta \psi + \epsilon \psi)  \, dx = 
   \int_{\Omega_j} \pphi_j \cdot (-\Delta \psi + \epsilon \psi)  \, dx \quad \forall \psi \in \VNN,  
\end{equation}
where we have invoked the fact that $\pphi_j$ has the local support $\Omega_j$, see \eqref{B2.8}. Note that, with $\epsilon=0$, \eqref{B2} roughly\footnote{The function $\pphi_j$, see the right-hand-side of the PDE in \eqref{B7}, must be sufficiently differentiable and $\mathrm{supp} (\pphi_j) \subset \Omega_j$ in order for \eqref{B2} to be the weak version of \eqref{B7} (when $\epsilon=0$). The basis functions defined in \eqref{B2.8} do not satisfy the necessary regularity conditions because they are discontinuous.} becomes the standard discrete, using a somewhat peculiar discretization space $\VNN$, weak version of the inhomogeneous biharmonic equation with homogeneous boundary conditions: 
\begin{equation}
\label{B7}
\begin{split}
    \Delta^2 \tau_j &= \Delta \pphi_j \quad \mbox{in } \Omega, \\
    \tau_j = \frac{\partial\tau_j}{\partial\mathbf{n}} &= 0 \quad \textnormal{on } \partial\Omega.
\end{split}
\end{equation}

The solution of a second or fourth order elliptic PDE depends significantly on the size and shape of the involved domain $\Omega$. Hence, \eqref{B1.1} and \eqref{B2} show that the aforementioned decaying properties of $\PP \pphi_j$ and $\PPN \pphi_j$ depend on $\Omega$ and the position of the true source relative to the boundary $\partial \Omega$. Hence, the "sharpness" of the reconstruction/recovery $\WW^{-1} \xx^*_j$ of $\pphi_j$, as well as the possibility of identifying several sources with Method I, will depend on the geometrical properties of $\Omega$ -- each domain must be studied separately. This issue is explored in more detail in the numerical experiments section below.   

\section{Methods II and III} 
If we apply weighted Tikhonov regularization to \eqref{A1}, we obtain the regularized solutions 
\begin{equation}
\label{C1}
\zz_{\alpha} = \argmin_{\zz} \left\{ \frac{1}{2} \| \KK \zz - \bb \|_Y^2 + \frac{1}{2} \alpha \| \WW \zz \|_X^2 \right\}, 
\end{equation}
where, in this paper, the regularization operator $\WW$ is defined in \eqref{A3.1}. In \cite{Elv20} we also, in addition to Method I described above, introduced the following two schemes for identifying sources: 
\begin{description}
\item{\bf \underline{Method II}} Defining $\yy = \WW \zz$, we obtain from \eqref{C1},  
\begin{equation}
\label{C2}
\yy_{\alpha} = \argmin_{\yy} \left\{ \frac{1}{2} \| \KK \WW^{-1} \yy - \bb \|_Y^2 + \frac{1}{2} \alpha \| \yy \|_X^2 \right\}, 
\end{equation}
which is Method II. 
Theorem 4.3 in \cite{Elv20} expresses that the minimum norm least squares solution $\yy_j^*$ of 
\begin{equation}
\nonumber 
\KK \WW^{-1} \yy = \KK \pphi_j, 
\end{equation}
satisfies 
\begin{align}
\label{C3}
    \left\| \pphi_j - \frac{\yy_j^*}{\| \PP \pphi_j \|_X} \right\|_X \leq  \| \pphi_j - \WW^{-1} \xx_j^* \|_X,  
\end{align}
where $\xx_j^*$ is the minimum norm least squares solution of \eqref{A3}. Recall that $\yy_j^* = \lim_{\alpha \rightarrow 0 } \yy_{j,\alpha}$, where 
\[
\yy_{j,\alpha} = \argmin_{\yy} \left\{ \frac{1}{2} \| \KK \WW^{-1} \yy - \KK \pphi_j \|_Y^2 + \frac{1}{2} \alpha \| \yy \|_X^2 \right\}. 
\]
Consequently, \eqref{C3} shows that, for small $\alpha > 0$, a scaled version of Method II can yield more accurate recoveries than Method I of the individual basis functions from their images under $\KK$. (Method I is defined in \eqref{A4.1}.)
\item{\bf \underline{Method III}} This method is defined by \eqref{C1}, i.e., the outcome of the scheme is $\zz_{\alpha}$. Note that there is a simple connection between methods II and III:  
\begin{equation}
\label{C4}
\zz_{\alpha} = \WW^{-1} \yy_{\alpha}. 
\end{equation}
Hence, a result similar to \eqref{C3}, though not that strong, also holds for Method III, see \cite{Elv20} for further details. 
\end{description}

\subsection{Several sources} 
Let us briefly comment on Method II's ability to localize several sources. Similar to \eqref{A6.01} we consider 
\[
\KK \WW^{-1} \yy = \KK (\pphi_{j_1}+\pphi_{j_2}+ \cdots +\pphi_{j_r}), 
\]
which has the minimum norm least squares solution 
\begin{align}
    \nonumber
    \yy^* &= (\KK \WW^{-1})^{\dagger} \KK \pphi_{j_1} + (\KK \WW^{-1})^{\dagger} \KK \pphi_{j_2} + \cdots + (\KK \WW^{-1})^{\dagger} \KK \pphi_{j_r} \\
    &= \yy_{j_1}^*+\yy_{j_1}^*+ \cdots + \yy_{j_r}^*, 
\end{align}
where $\yy_{j_s}^*$, for $s=1,2, \ldots,r$, represents the minimum norm least squares solution of 
\begin{equation*}
 \KK \WW^{-1} \yy = \KK \pphi_{j_s}.   
\end{equation*}
Since $\yy_{j_s}^*$, for $s=1,2, \ldots,r$, satisfies an inequality in the form \eqref{C3}, we conclude that $\yy^*$ is a sum of vectors which can yield better recoveries of the individual basis function from their images under $\KK$ than Method I. We therefore expect that Method II can separate several local sources whenever Method I can do it. 

Invoking \eqref{C4} leads to a similar type of argument for Method III's ability to identify two, or more, sources. We omit the details. 

\section{Numerical experiments}
We avoided inverse crimes by generating the synthetic observation data $d$ in \eqref{eq1} using a finer grid for the state $u$ than was employed for the computations of the inverse solutions: $h_{\mathrm{forward}} = 0.5 \cdot h_{\mathrm{inverse}}$, where $h_{\mathrm{forward}}$ and $h_{\mathrm{inverse}}$ are the grid parameters associated with the meshes used to produce $d$ and 
the inverse solutions, respectively. More specifically, on the unit square we employed a $64 \times 64$ mesh for the forward computations and a $32 \times 32$ grid for computing the unknown $u$ by solving \eqref{eq1}-\eqref{eq2}. Except for the results presented in Example 4, a coarser mesh, $16 \times 16$, was applied for the unknown source $f$ in the numerical solution of \eqref{eq1}-\eqref{eq2}. 

The triangulations of the non-square geometries were obtained by "removing" grid cells from the triangulations of the associated square domains. We used the Fenics software to generate the meshes and the matrices, and the optimization problem \eqref{eq1}-\eqref{eq2} was solved with Matlab in terms of the associated optimality system. In all the simulations $\epsilon= 1$, and no noise was added to the observation data $d$, see \eqref{eq1}-\eqref{eq2}. (Some simulations with noisy observation data are presented in \cite{Elv20}.)

\subsection*{Example 1: L-shaped versus square geometry}
Figure \ref{fig:l_square} displays the numerical results obtained by solving \eqref{eq1}-\eqref{eq2} for an L-shaped geometry and a square-shaped geometry, respectively. The true sources, shown in panels (a) and (b), are located at the same positions for both geometries. 

Method I fails to separate the two sources on the square domain, but works well for the L-shaped case. On the other hand, methods II and III handle both geometries adequately, noticing that the separation is more pronounced for the L-formed domain. This is consistent with the mathematical result \eqref{C3}, which expresses that a  scaled version of Method II yields more $L^2$-accurate approximations than Method I.  
These results illuminate the impact of the geometry on the inverse source problem and suggest that convex domains lead to harder identification tasks than non-convex regions. 

In these simulations the two true local sources did not equal a single basis function $\phi_j$, but was instead defined as a sum of four basis functions with neighbouring supports. Hence, for each of the two local sources, the considerations presented in subsection \ref{composite_sources} are relevant. 
\begin{figure}[h]
    \centering
    \begin{subfigure}[b]{0.45\linewidth}        
        \centering
        \includegraphics[width=\linewidth]{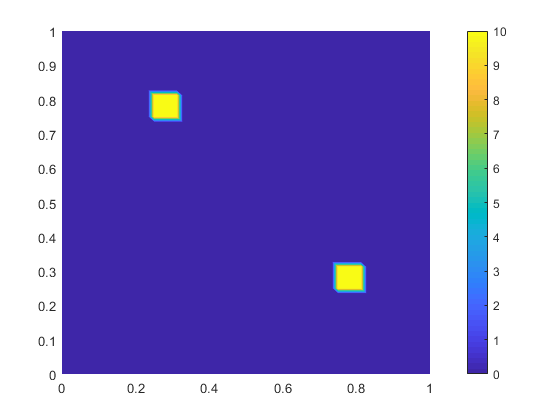}
        \caption{True source}
    \end{subfigure}
    \begin{subfigure}[b]{0.45\linewidth}        
        \centering
        \includegraphics[width=\linewidth]{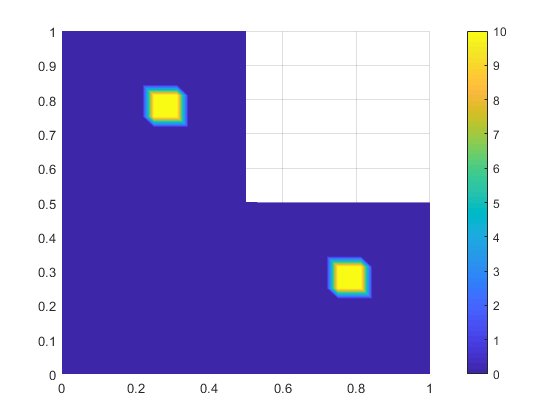}
        \caption{True source}
    \end{subfigure}\par
    \begin{subfigure}[b]{0.45\linewidth}        
        \centering
        \includegraphics[width=\linewidth]{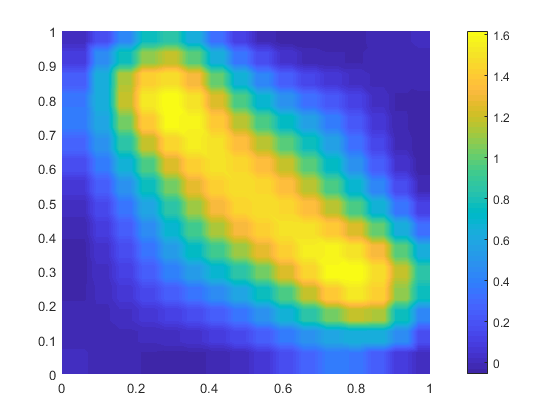}
        \caption{Method I}
    \end{subfigure}
    \begin{subfigure}[b]{0.45\linewidth}        
    \centering
    \includegraphics[width=\linewidth]{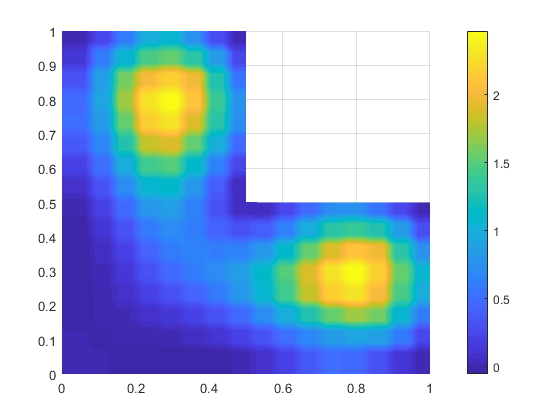}
    \caption{Method I}
    \end{subfigure}\par
    \begin{subfigure}[b]{0.45\linewidth}        
        \centering
        \includegraphics[width=\linewidth]{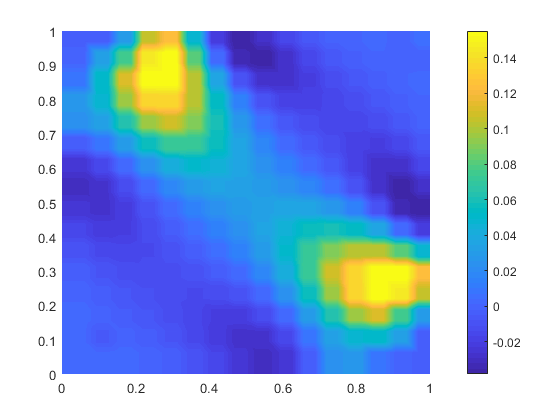}
        \caption{Method II}
    \end{subfigure}
    \begin{subfigure}[b]{0.45\linewidth}        
    \centering
    \includegraphics[width=\linewidth]{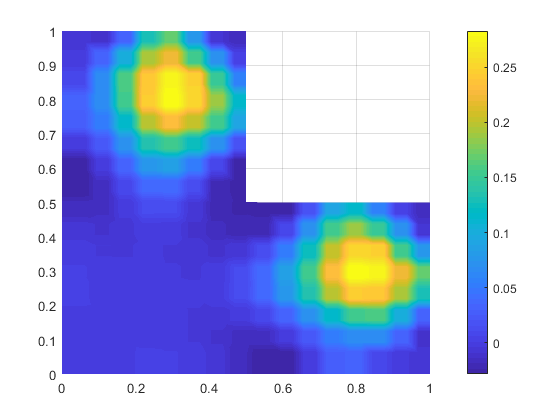}
    \caption{Method II}
    \end{subfigure}\par
    \begin{subfigure}[b]{0.45\linewidth}        
        \centering
        \includegraphics[width=\linewidth]{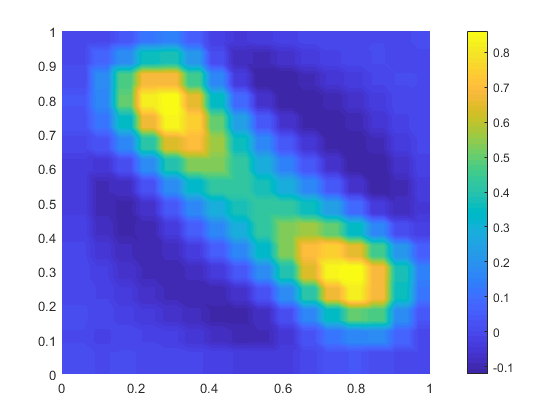}
        \caption{Method III}
    \end{subfigure}
    \begin{subfigure}[b]{0.45\linewidth}        
    \centering
    \includegraphics[width=\linewidth]{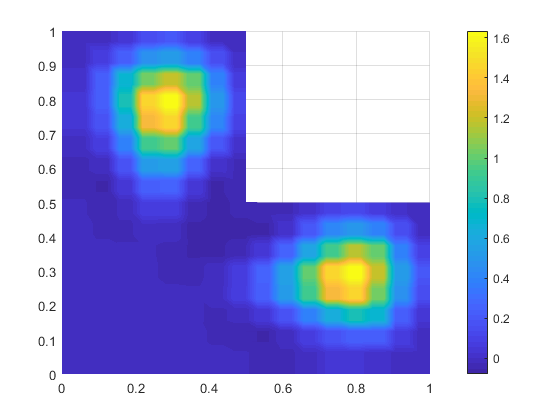}
    \caption{Method III}
    \end{subfigure}
    \caption{Example 1. Comparison of the true sources and the inverse solutions for an L-shaped and square-shaped geometry. The regularization parameter was $\alpha = 10^{-6}$.}
    \label{fig:l_square}
\end{figure}

\subsection*{Example 2: Square versus horseshoe} 
Figure \ref{fig:horse_square} shows computations performed with a horseshoe-shaped domain and a square region. 
In these simulations each of the two true sources consisted of a single basis function. Hence, Corollary \ref{corollary:several_source} is directly applicable. Again we observe that the source identification works better for a non-convex domain than for a convex region. 

We also performed computations with partial boundary observations $d$, see Figure \ref{fig:partial}: boundary observation data was only available for the part of the boundary marked with red in panel (a). We observe that this reduces the quality of the reconstruction of the true sources, compare figures \ref{fig:horse_square}  and \ref{fig:partial}, and that Method I works somewhat better than methods II and III in this case. 
\begin{figure}[h]
    \centering
    \begin{subfigure}[b]{0.45\linewidth}        
        \centering
        \includegraphics[width=\linewidth]{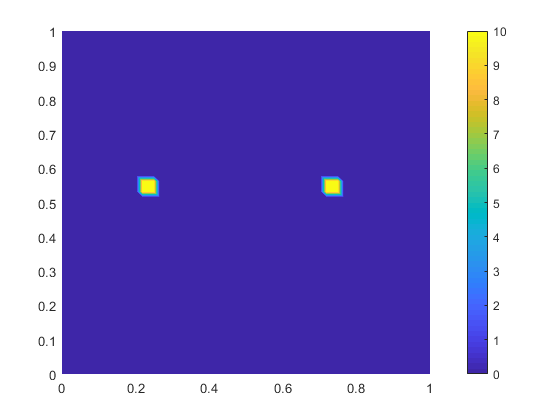}
        \caption{True source}
    \end{subfigure}
    \begin{subfigure}[b]{0.45\linewidth}        
        \centering
        \includegraphics[width=\linewidth]{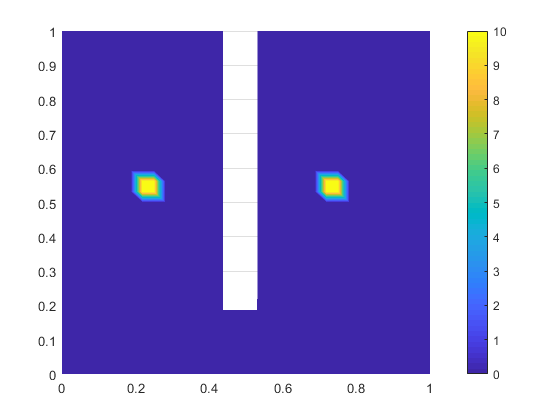}
        \caption{True source}
    \end{subfigure}\par
    \begin{subfigure}[b]{0.45\linewidth}        
        \centering
        \includegraphics[width=\linewidth]{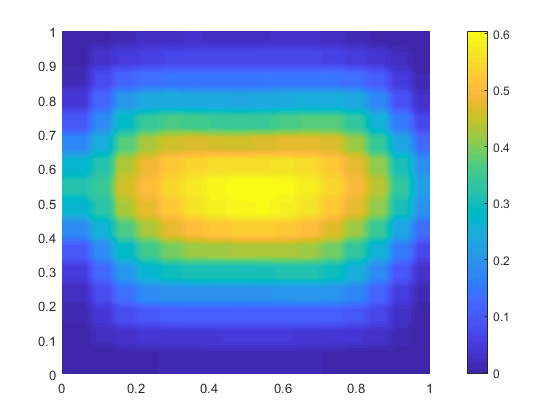}
        \caption{Method I}
    \end{subfigure}
    \begin{subfigure}[b]{0.45\linewidth}        
    \centering
    \includegraphics[width=\linewidth]{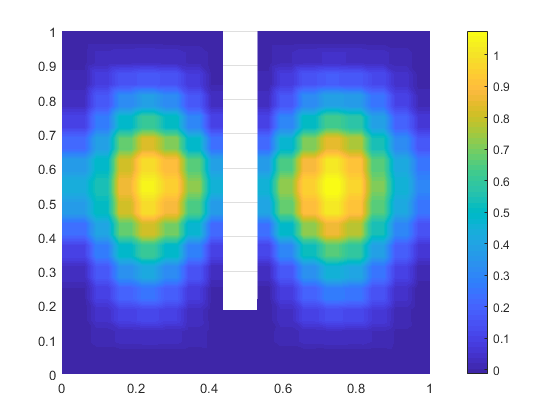}
    \caption{Method I}
    \end{subfigure}\par
    \begin{subfigure}[b]{0.45\linewidth}        
        \centering
        \includegraphics[width=\linewidth]{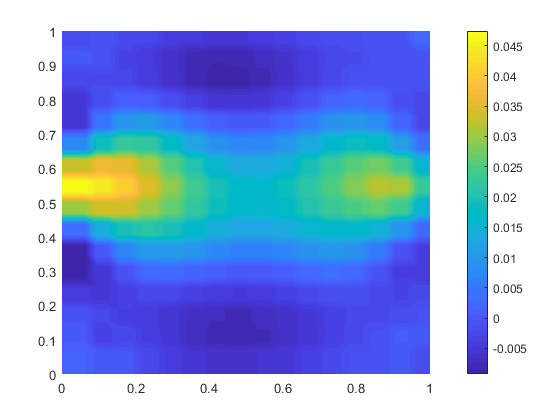}
        \caption{Method II}
    \end{subfigure}
    \begin{subfigure}[b]{0.45\linewidth}        
    \centering
    \includegraphics[width=\linewidth]{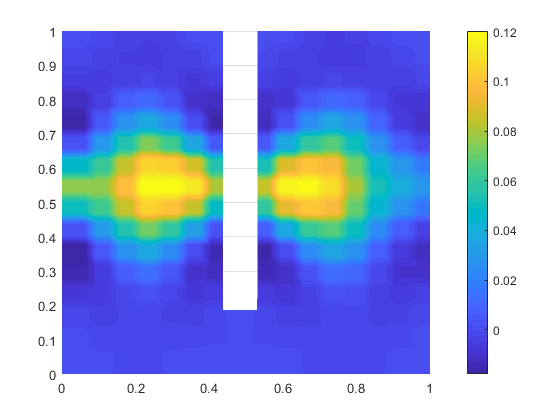}
    \caption{Method II}
    \end{subfigure}\par
    \begin{subfigure}[b]{0.45\linewidth}        
        \centering
        \includegraphics[width=\linewidth]{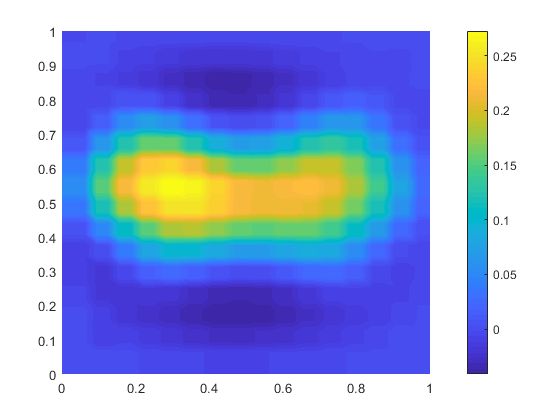}
        \caption{Method III}
    \end{subfigure}
    \begin{subfigure}[b]{0.45\linewidth}        
    \centering
    \includegraphics[width=\linewidth]{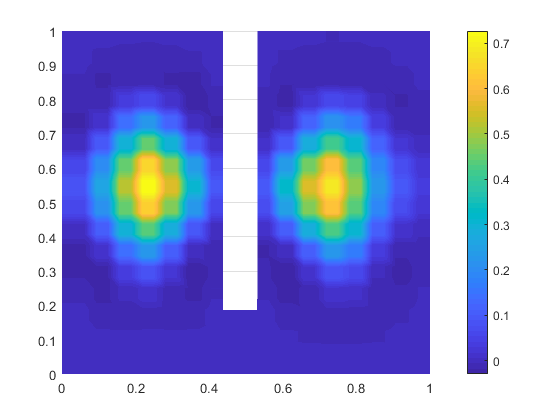}
    \caption{Method III}
    \end{subfigure}
    \caption{Example 2. Comparison of the true sources and the inverse solutions for a horseshoe-shaped and a square-shaped geometry. The regularization parameter was $\alpha = 10^{-6}$.}
    \label{fig:horse_square}
\end{figure}
\begin{figure}[h]
    \centering
    \begin{subfigure}[b]{0.45\linewidth}        
        \centering
        \includegraphics[width=\linewidth]{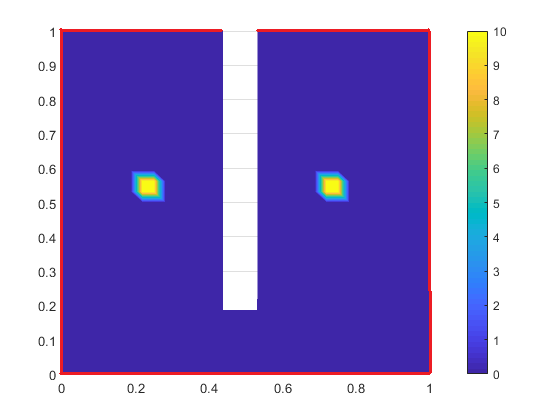}
        \caption{True source.}
    \end{subfigure}
    \begin{subfigure}[b]{0.45\linewidth}        
        \centering
        \includegraphics[width=\linewidth]{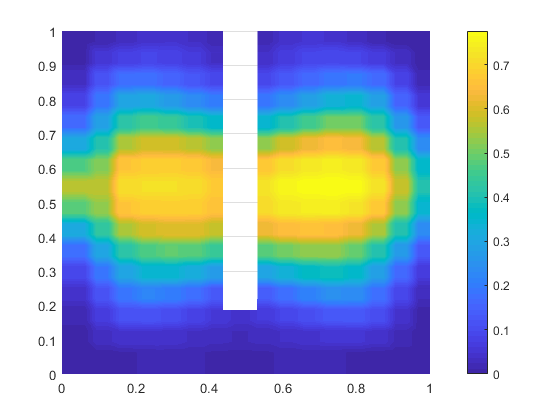}
        \caption{Method I.}
    \end{subfigure}\par
    \begin{subfigure}[b]{0.45\linewidth}        
        \centering
        \includegraphics[width=\linewidth]{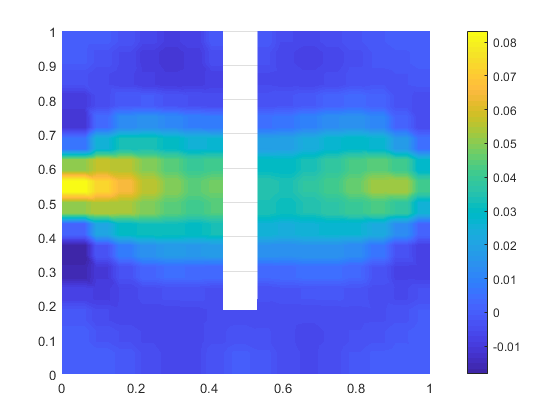}
        \caption{Method II.}
    \end{subfigure}                                                                                                                                                                                                                      
    \begin{subfigure}[b]{0.45\linewidth}        
        \centering
        \includegraphics[width=\linewidth]{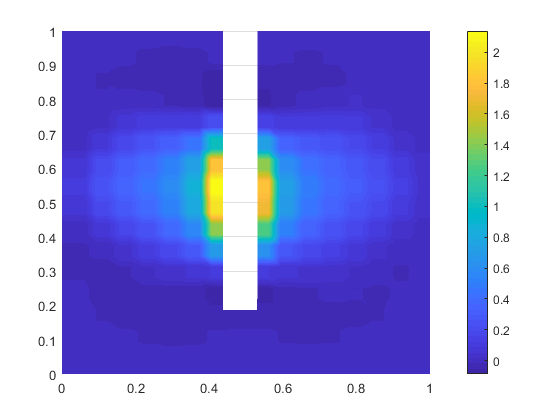}
        \caption{Method III.}
    \end{subfigure}\par
    \caption{Example 2 with partial boundary observation observations: $d$ in \eqref{eq1} is only defined along the red line segments in panel (a) (and the boundary integral in \eqref{eq1} is adjusted accordingly). Comparison of the true sources and the inverse solutions. The regularization parameter was $\alpha = 10^{-6}$.}
    \label{fig:partial}
\end{figure}

\subsection*{Example 3: Rectangles, distance to the boundary}
So far we have compared convex and non-convex regions. We now illuminate how the distance from the source(s) to the boundary of the domain influence the quality of the recovery, see Figure \ref{fig:rectangles}: The identification of the three sources improves as the distance to the boundary decreases. Also, methods II and III yield better results than Method I.  

In this example each of the true local sources are composed of several basis functions with neighbouring supports, cf. subsection \ref{composite_sources} for further details.  
\begin{figure}[h]
    \centering
    \begin{subfigure}[b]{0.32\linewidth}        
        \centering
        \includegraphics[width=\linewidth]{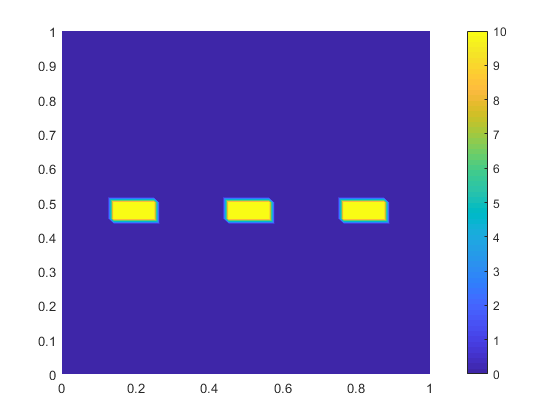}
        \caption{True source.}
    \end{subfigure}
    \begin{subfigure}[b]{0.32\linewidth}        
        \centering
        \includegraphics[width=\linewidth]{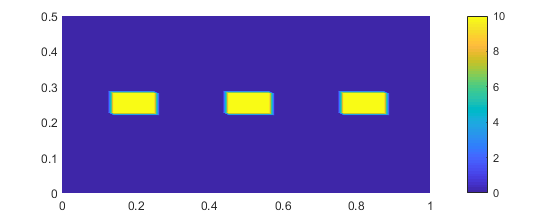}
        \caption{True source.}
    \end{subfigure}
    \begin{subfigure}[b]{0.32\linewidth}        
        \centering
        \includegraphics[width=\linewidth]{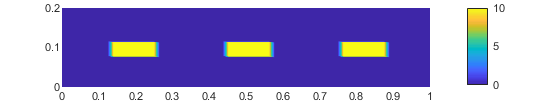}
        \caption{True source.}
    \end{subfigure}\par
    \begin{subfigure}[b]{0.32\linewidth}        
        \centering
        \includegraphics[width=\linewidth]{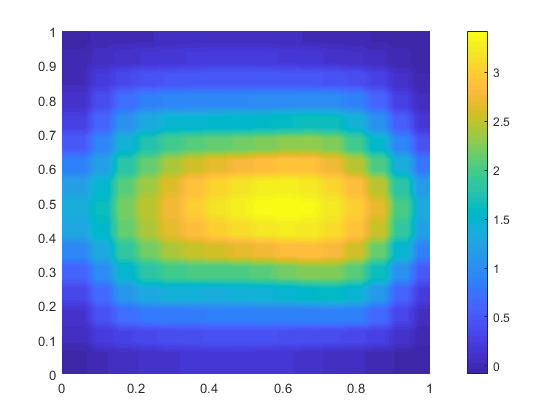}
        \caption{Method I.}
    \end{subfigure}
    \begin{subfigure}[b]{0.32\linewidth}        
        \centering
        \includegraphics[width=\linewidth]{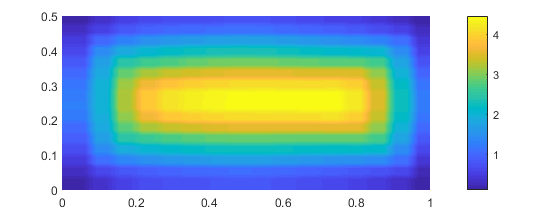}
        \caption{Method I.}
    \end{subfigure}
    \begin{subfigure}[b]{0.32\linewidth}        
        \centering
        \includegraphics[width=\linewidth]{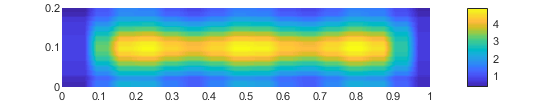}
        \caption{Method I.}
    \end{subfigure}\par   
    \begin{subfigure}[b]{0.32\linewidth}        
        \centering
        \includegraphics[width=\linewidth]{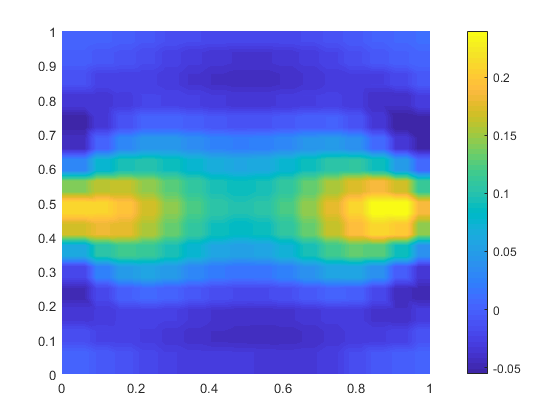}
        \caption{Method II.}
    \end{subfigure}
    \begin{subfigure}[b]{0.32\linewidth}        
        \centering
        \includegraphics[width=\linewidth]{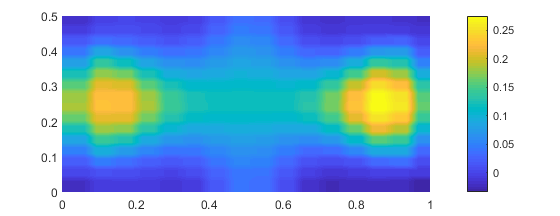}
        \caption{Method II.}
    \end{subfigure}
    \begin{subfigure}[b]{0.32\linewidth}        
        \centering
        \includegraphics[width=\linewidth]{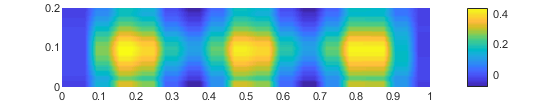}
        \caption{Method II.}
    \end{subfigure}\par   
    \begin{subfigure}[b]{0.32\linewidth}        
        \centering
        \includegraphics[width=\linewidth]{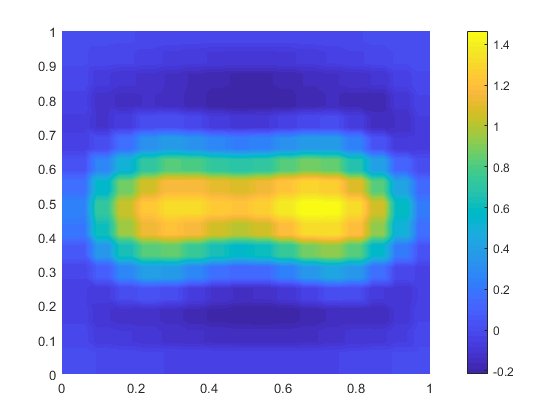}
        \caption{Method III.}
    \end{subfigure}
    \begin{subfigure}[b]{0.32\linewidth}        
        \centering
        \includegraphics[width=\linewidth]{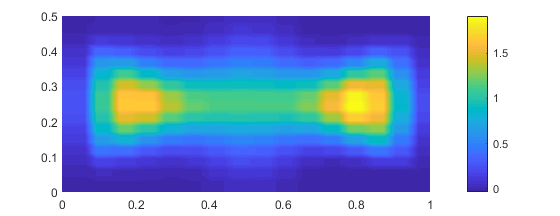}
        \caption{Method III.}
    \end{subfigure}
    \begin{subfigure}[b]{0.32\linewidth}        
        \centering
        \includegraphics[width=\linewidth]{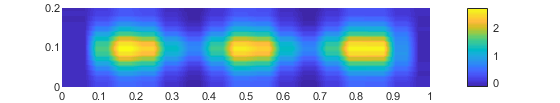}
        \caption{Method III.}
    \end{subfigure}\par       
    \caption{Example 3. Comparison of the true sources and the inverse solutions for three different rectangles. The regularization parameter was $\alpha = 10^{-4}$ and $\Omega = (0,1) \times (0,\gamma)$, where $\gamma = 1, 0.5$ and $0.2$ for the left, middle and right panels, respectively.}
    \label{fig:rectangles}
\end{figure}

\subsection*{Example 4: A smooth local source}
In examples 1-3 we considered true sources which are piecewise constant. Figure \ref{fig:composite} shows results obtained with the true smooth source
\begin{equation}\nonumber
    f = e^{-10(x_1-0.3)^2 - 5(x_2-0.25)^2}.
\end{equation}
Methods I and III handle this case very well, but the outcome of Method II is not very good. The outcome of Method I is as one could anticipate from the discussion presented in subsection \ref{composite_sources}, but we do not have a good understanding of the rather poor performance of Method II for this particular problem. 
\begin{figure}[h]
    \centering
    \begin{subfigure}[b]{0.45\linewidth}        
        \centering
        \includegraphics[width=\linewidth]{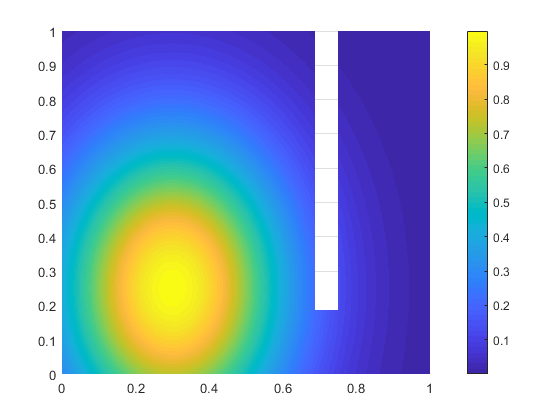}
        \caption{True source.}
    \end{subfigure}
    \begin{subfigure}[b]{0.45\linewidth}        
        \centering
        \includegraphics[width=\linewidth]{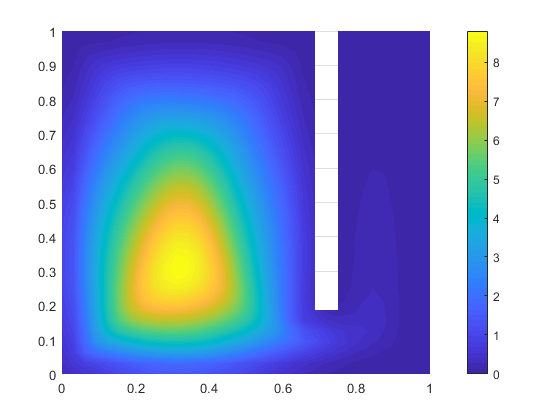}
        \caption{Method I.}
    \end{subfigure}\par
    \begin{subfigure}[b]{0.45\linewidth}        
        \centering
        \includegraphics[width=\linewidth]{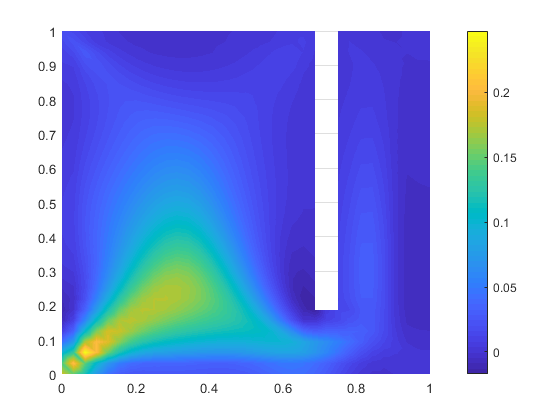}
        \caption{Method II.}
    \end{subfigure}
    \begin{subfigure}[b]{0.45\linewidth}        
        \centering
        \includegraphics[width=\linewidth]{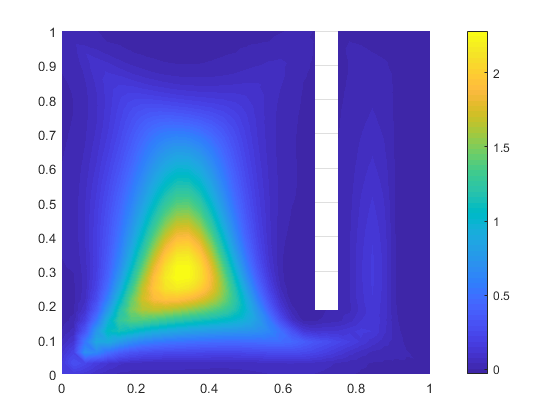}
        \caption{Method III.}
    \end{subfigure}\par
    \caption{Example 4. Comparison of the true smooth source and the inverse solutions. The regularization parameter was $\alpha = 10^{-6}$.}
    \label{fig:composite}
\end{figure}

\subsection*{Example 5: Identifying local constant sources with a known magnitude}\label{subsec:constsource}
%
If the magnitude of the local sources is known, we only need to recover the size and positions of the sources. We will now briefly explain how Method I can be used to handle such cases. Recall Corollary \ref{corollary:several_source}, which expresses that Method I in many cases can detect the index, and thereby the position, of the individual local sources. This leads to the following three-stage optimization procedure: 
\begin{enumerate}
    \item Apply Method I, i.e., compute $\WW^{-1} \xx_{\alpha}$, where $\xx_{\alpha}$ is the outcome of employing standard Tikhonov  regularization \eqref{A0}, and $\WW$ is defined in \eqref{A3.1}. 
    \item Retrieve the positions $p_1, p_2, ..., p_m$ of all the local maximums of $\WW^{-1} \xx_{\alpha}$. 
    \item Use $p_1, p_2, ..., p_m$ as centers of simple geometrical objects, e.g., circles, and solve the optimization problem 
        \begin{equation*} 
            \min_{r_1, r_2 ..., r_m \in \mathbb{R}_+} \frac{1}{2}\|u-d\|^2_{L^2(\partial\Omega)}
        \end{equation*}
        subject to 
        \begin{equation*}
            \begin{split}
                -\Delta u + \epsilon u &= c \sum_{i=1}^m \mathcal{X}_{B_{r_i}(p_i)}  \quad \mbox{in } \Omega, \\
                \frac{\partial u}{\partial \nn} &= 0  \quad \mbox{on } \partial \Omega, 
            \end{split}
        \end{equation*}
        where $B_{r_i}(p_i) = \{x\in \Omega: \|x - p_i\| < r_i\}$, and $c$ is the known magnitude of the source(s).
\end{enumerate}

Panel (c) in Figure \ref{fig:radii} shows that this procedure can work very well: Even though Method I almost fails to detect the small local source in the lower right corner of the L-shaped domain, see panel (b), the radii optimization approach handles the case very well. 
\begin{figure}[h]
    \centering
    \begin{subfigure}[b]{0.5\linewidth}        
        \centering
        \includegraphics[width=\linewidth]{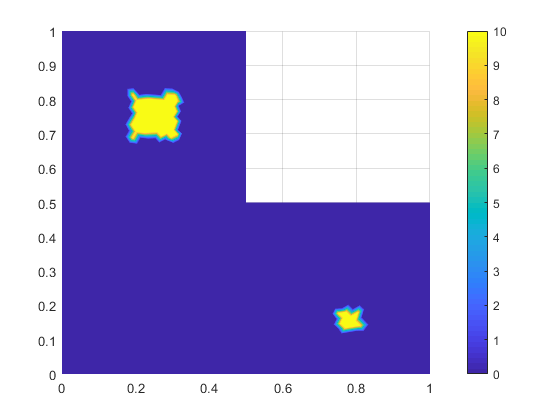}
        \caption{True source.}
    \end{subfigure}\par
    \begin{subfigure}[b]{0.5\linewidth}        
        \centering
        \includegraphics[width=\linewidth]{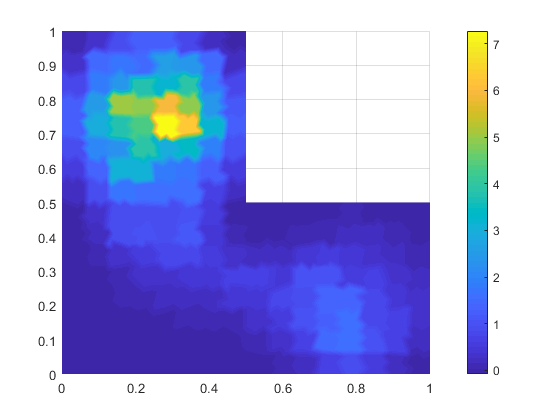}
        \caption{Method I.}
    \end{subfigure}\par
    \begin{subfigure}[b]{0.5\linewidth}        
        \centering
        \includegraphics[width=\linewidth]{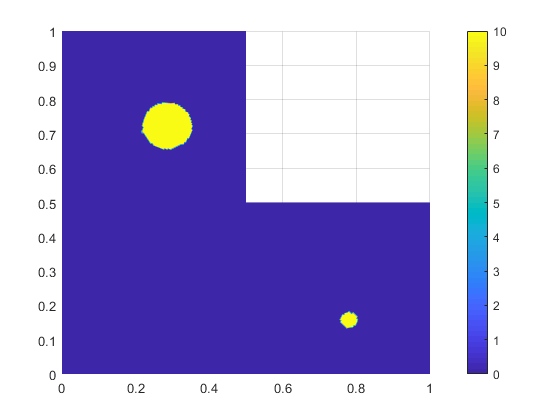}
        \caption{Radii optimization.}
    \end{subfigure}
    \caption{Results obtained with the three-stage algorithm described in Example 5. Panel (a) depicts the true sources, and panel (b) shows the inverse solution computed with Method I, where the regularization parameter was $\alpha = 10^{-6}$. Finally, panel (c) displays the outcome of the radii optimization algorithm.}
    \label{fig:radii}
\end{figure}

\subsection*{Discussion} 
In some cases methods II and/or III work better than Method I, see figures \ref{fig:l_square}, \ref{fig:horse_square} and \ref{fig:rectangles}. This is in contrast to the results presented in figures \ref{fig:partial} and \ref{fig:composite} for which Method I provides the best source identification. Hence, we can not advice that only one of the algorithms should be used. Method I should be applied to get a rough picture of the location of the sources, since this scheme can localize the position of the maximum of single sources. The outputs of methods II and III may yield less "smeared out/blurred" images of the true sources, but should only be trusted if their localization is consistent with the results obtained with Method I. 

Our mathematical analysis shows that the ability to identify internal local sources from Dirichlet boundary data highly depends on the geometry of the domain and the position of the true sources relative to the boundary of this domain, see the analysis  leading to \eqref{B1.1} and \eqref{B2}. The numerical experiments exemplify this, and, in particular, source identification for non-convex domains can lead to better recovery than computations performed with convex domains of approximately the same size.  

\appendix
\section{Poisson's equation} 
\label{Poisson}
In many applications $\epsilon = 0$, and the PDE in \eqref{eq2} becomes Poisson's equation. Then the boundary value problem \eqref{eq2}, for a given $f$, does not have a unique solution, and $f$ must satisfy the complementary condition 
\begin{equation}
    \label{App1.1}
    \int_{\Omega} f \, dx = 0. 
\end{equation}
Note that the basis functions \eqref{B2.8} do not satisfy this condition. In fact, it may be difficult to construct convenient $L^2$-orthonormal basis functions with local supports which obey \eqref{App1.1}. To handle this matter, one can "replace" the right-hand-side $f$ in the state equation with $f - | \Omega |^{-1} \int_{\Omega} f \, dx$:
    \begin{equation} \label{App1.2}
        \min_{(f,u) \in F_h \times H^1(\Omega)} \left\{ \frac{1}{2}\|u-d\|_{L^2(\partial\Omega)}^2 + \frac{1}{2}\alpha\|\WW f\|_{L^2(\Omega)}^2 \right\}
    \end{equation}
    subject to 
    \begin{equation} \label{App1.3}
    \begin{split}
        -\Delta u &= f - | \Omega |^{-1} \int_{\Omega} f \, dx \quad \mbox{in } \Omega, \\
        \frac{\partial u}{\partial \nn} &= 0  \quad \mbox{on } \partial \Omega. 
    \end{split}
    \end{equation}
Note that \eqref{App1.3} is meaningful for any $f \in L^2(\Omega)$, and it follows that we can use basis functions in the form \eqref{B2.8} to discretize the control.  

Let us also make a few remarks about the forward operator associated with \eqref{App1.2}-\eqref{App1.3}. 
Assume that $(f^*, u^*)$ solves \eqref{App1.2}-\eqref{App1.3}.
Note that, if $u^*$ solves $\eqref{App1.3}$, so does $u^*+c$ for any constant $c$. Consider the function 
\begin{equation*}
    g(c)=\frac{1}{2}\|u^*+c-d\|_{L^2(\partial\Omega)}^2 + \frac{1}{2}\alpha\|\WW f\|_{L^2(\Omega)}^2, \, c \in \mathbb{R}.
\end{equation*}
The optimality condition 
\begin{equation*}
    g'(0) = 0
\end{equation*}
yields that 
\begin{equation*}
    \int_{\partial\Omega} u^* \, dS = \int_{\partial\Omega} d \, dS = 0, 
\end{equation*}
provided that the data $d$, which typically is a measured potential, has been normalized such that 
\begin{equation*}
    \int_{\partial\Omega} d \, dS = 0. 
\end{equation*}
Consequently, the forward operator associated with \eqref{App1.2}-\eqref{App1.3} is 
$$
\KK: F_h \rightarrow L^2(\partial \Omega), \quad
f \mapsto u|_{\partial \Omega},
$$
where $u$, for a given $f$, denotes the solution of the boundary value problem \eqref{App1.3} which satisfies 
\begin{equation*}
     \int_{\partial\Omega} u \, dS = 0.
\end{equation*}
We also note that, in this case any constant function $f(x)=C$, for all $x \in \Omega$, belongs to the nullspace of $\KK$. Consequently, the minimum norm least squares solution of $\KK f = d$ will have zero integral.

\bibliographystyle{abbrv}
\bibliography{references}

\end{document}